\UseRawInputEncoding
\documentclass[11pt,twoside,reqno]{amsart}
\usepackage{url}
\usepackage{mathrsfs}
\usepackage{latexsym}
\usepackage{amsmath}
\usepackage{amssymb}
\usepackage{amsthm}
\usepackage{amscd}
\usepackage{amsfonts}
\usepackage{epsfig}
\usepackage{multicol}
\usepackage{CJK}
\usepackage{bbm}
\usepackage{amsmath, amsthm, amsfonts, amssymb,mathtools}
\usepackage{mathrsfs}
\usepackage{enumitem} 

\numberwithin{equation}{section}

\def \Re{\textup{Re}}

\theoremstyle{plain}
\newtheorem{thm}{Theorem}[section]
\newtheorem{lem}[thm]{Lemma}
\newtheorem{prop}[thm]{Proposition}
\newtheorem{rem}[thm]{Remark}
\newtheorem{cor}[thm]{Corollary}

\newtheorem{conj}[thm]{Conjecture}

\newcommand\restr[2]{{
		\left.\kern-\nulldelimiterspace 
		#1 
		\right|_{#2} 
}}

\newcommand{\pf}{\noindent\begin {proof}}
\newcommand{\epf}{\end{proof}}
\makeatletter

\begin{document}
		\title[Inequalities for $\zeta(s)-\psi(1-s)$]{Inequalities for $\zeta(s)-\psi(1-s)$ related to a conjecture of Henry}
	
	\author{Liwen Gao}
	\address{Liwen Gao: Department of Mathematics, Nanjing University,
		Nanjing 210093, China;  gaoliwen1206@smail.nju.edu.cn}
	
	\author{Xuejun Guo$^{\ast}$}
	\address{Xuejun Guo$^{\ast}$: Department of Mathematics, Nanjing University,
		Nanjing 210093, China;  guoxj@nju.edu.cn}
	\address{$^{\ast}$Corresponding author}
	
	\thanks{The authors are supported by National Natural Science Foundation of China (Nos. 12231009).}
	
	\date{}
	\maketitle
	
	\noindent
	
	\begin{abstract}
		In this paper we investigate analytic inequalities related to a conjecture of Henry involving the difference between the Riemann zeta function and the digamma function. By treating $\zeta(s)-\psi(1-s)$ as a unified analytic object, we establish its strict convexity and monotonicity on suitable intervals. Moreover, we obtain explicit boundary limits of the derivative, expressed in terms of $\pi$, $\log (2\pi)$ and Stieltjes constants. These results lead to new inequalities for $\zeta(s)-\psi(1-s)$ and shed further light on the conjecture.
	\end{abstract}
	\medskip
	
	\textbf{Keywords:} Riemann zeta function, digamma function, Stieltjes constants
	
	\vskip 10pt
	
	\textbf{2020 Mathematics Subject Classification:} 11M06, 11Y35, 30B10.
	\section{Introduction}
	The Riemann zeta function is fundamental in analytic number theory.  It is defined for $\Re(s)>1$	by the absolutely convergent Dirichlet series
	$$\zeta(s)=\sum_{n=1}^{\infty} \frac{1}{n^s},$$
	and admits a meromorphic continuation to the whole complex plane with a simple pole at $s=1$. In particular, its Laurent expansion at $s=1$ is given by
	\begin{equation}\label{zetaL}
	\zeta(s)
	= \frac{1}{s-1}
	+ \sum_{k=0}^{\infty} \frac{(-1)^k\,\gamma_k\,(s-1)^k}{k!},
	\end{equation}
	where $\gamma_k$ denote the Stieltjes constants (see Stieltjes's paper \cite{s1} and Ferguson's work \cite{f}). For $k=0$, this reduces to the Euler–Mascheroni constant 
	$\gamma_0 \approx 0.577216$.
	
	Let $\psi(s)=\frac{\Gamma'(s)}{\Gamma(s)}$ be digamma function, where
	$\Gamma$ denotes the usual gamma function. The digamma function is deeply related to the Riemann zeta function. One has
	\begin{equation}\label{digammaL}
	\psi(s+1)= -\gamma_0 + \sum_{j=2}^{\infty} (-1)^j \zeta(j)\, s^{j-1},
	\qquad |s|<1.
\end{equation}

	In \cite{h1}, Henry proposed the following conjecture.
		\begin{conj}\label{conj}
		In this conjecture we define $ b' := \gamma_0+\frac12$ and
		$b :=\gamma_0-\frac12$.
		Then if $0<s<1$, we conjecture that the relation
		\begin{equation}\label{congformula}
			\pi\cot(\pi s)+s < \zeta(s)-\psi(s)
			< \pi\cot(\pi s)+ b'\,s+b
		\end{equation}
		holds.
	\end{conj}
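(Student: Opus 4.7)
The plan is to use the reflection formula $\psi(1-s)-\psi(s)=\pi\cot(\pi s)$ to rewrite the conjectured double inequality (\ref{congformula}) as
$$s<F(s)<b's+b, \qquad F(s):=\zeta(s)-\psi(1-s).$$
Both $\zeta(s)$ and $\psi(1-s)$ have simple poles at $s=1$ with residue $1$, so the singularities cancel and $F$ extends to a real-analytic function on a neighbourhood of $[0,1]$. Inserting the Laurent expansion (\ref{zetaL}) of $\zeta$ around $s=1$ and the Taylor series (\ref{digammaL}) for $\psi$ at $1$, I expect to obtain $F(0)=\zeta(0)-\psi(1)=-\tfrac12+\gamma_0=b$ and $F(1)=2\gamma_0=b'+b$. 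In particular, the upper linear bound $b's+b$ is exactly the chord joining $(0,F(0))$ to $(1,F(1))$, so any strict-convexity result for $F$ on $[0,1]$ would automatically yield the upper half of (\ref{congformula}).

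The next step is therefore to establish $F''(s)>0$ on $(0,1)$. Using the two expansions one computes $F''(s)=\zeta''(s)-\psi''(1-s)$, which extends continuously to $[0,1]$ with positive boundary values $F''(0)=\zeta''(0)+2\zeta(3)$ and $F''(1)=\gamma_2+2\zeta(3)$. To prove positivity throughout the interval I would seek an integral representation that displays $F''$ as a manifestly positive expression, for instance by combining the Hermite/Abel--Plana representation of $\zeta(s)$ with the elementary identity $-\psi''(x)=2\sum_{n\geq 0}(n+x)^{-3}$, and verifying that the two cubic singularities at $s=1$ cancel and leave a nonnegative remainder.

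For the lower bound, set $G(s):=F(s)-s$. Then $G$ inherits strict convexity and satisfies $G(0)=b>0$ and $G(1)=2\gamma_0-1>0$. Computing the explicit boundary limits of the derivative (as promised by the abstract),
$$G'(0^+)=\frac{\pi^2}{6}-\frac{\log(2\pi)}{2}-1<0, \qquad G'(1^-)=\frac{\pi^2}{6}-\gamma_1-1>0,$$
one learns that $G$ has a unique interior minimum at some $s_{\ast}\in(0,1)$, and the tangent-line inequalities at the endpoints already force $G>0$ on $[0,\,b/|G'(0)|]\cup[1-G(1)/G'(1),\,1]$. The remaining task, and the main obstacle, is to rule out a sign change of $G$ in the intermediate region. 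I would attempt this either through a quantitative strong-convexity estimate $G''(s)\geq c>0$ combined with a second-order tangent bound, or through a certified numerical verification backed by explicit tail error terms from (\ref{zetaL}) and (\ref{digammaL}). This last step is where the conjecture genuinely bites and is presumably the reason Henry's inequality remains open in general.
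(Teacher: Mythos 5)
Your plan coincides with the paper's proof: the reduction via $\psi(1-s)-\psi(s)=\pi\cot(\pi s)$, the endpoint values $F(0)=\gamma_0-\tfrac12$ and $F(1)=2\gamma_0$ making $b's+b$ the chord, the upper bound from strict convexity, and the analysis of $G(s)=F(s)-s$ via the boundary limits of $G'$ and a unique interior minimum are exactly Corollary \ref{cor1} as proved there. The two steps you defer are completed essentially as you anticipate: for $F''>0$ the paper uses Lemma \ref{Ri} (the Euler--Maclaurin-type representation with $\{t\}-\tfrac12$, playing the role of your Abel--Plana idea) together with $-\psi''(1-s)$, observes that the cubic singularities cancel to leave the manifestly positive term $P(s)=\sum_{n\ge 2}2(n-s)^{-3}\ge 2(\zeta(3)-1)\approx 0.404$, and bounds the oscillatory remainder $J(s)$ by $0.374$ via an integration-by-parts identity (Lemma \ref{fsup}) and explicit tail estimates; for the lower bound it locates $s_0\approx 0.485$ and verifies $G(s_0)\approx 0.00306>0$ by a Mathematica computation. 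So your skeleton is sound, but be aware that the decisive final inequality rests, in the paper as in your sketch, on a numerical evaluation with a margin of only about $3\times 10^{-3}$, and that the quantitative domination $|J(s)|<P(s)$ is the real work in the convexity step.
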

	Motivated by this conjecture,  we investigate strict convexity and monotonicity  of $F(s)$ on $(0,1)$ and determine the precise boundary behavior $F'(s)$, obtaining explicit constants involving $\pi$, $\log(2\pi)$, and the first Stieltjes constant. Our approach relies on Stieltjes expansions of the Riemann zeta function with series representations of the digamma function, and sharp asymptotic estimates, which may be applicable to other combinations of special functions. Our main theorem is as follows.
\begin{thm}\label{Fap}
	Let $F(s)=\zeta(s)-\psi(1-s)$ be a real-valued function defined on the interval $0<s<1$. Then the following assertions hold:
	\begin{enumerate}
		\item $F''(s)>0$ for all $s\in(0,1)$; i.e., $F$ is strictly convex on $(0,1)$.
		\item $F'(s)>0$ for all $s\in(0,1)$; i.e., $F$ is strictly increasing on $(0,1)$.
		\item The boundary behavior of $F'(s)$ is given by
			$$
			\lim_{s\to 0^+} F'(s) = \frac{\pi^2}{6}-\frac{1}{2}\log(2\pi)\quad ,\quad
			\lim_{s\to 1^-} F'(s) = \frac{\pi^2}{6}-\gamma_1.$$

	\end{enumerate}
\end{thm}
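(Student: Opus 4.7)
I would prove the three assertions in the order (3), (1), (2), since (2) will follow from (1) and (3) by a short monotonicity argument.

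For part (3), differentiate once to obtain $F'(s) = \zeta'(s) + \psi'(1-s)$. At $s=0^+$ one simply reads off the classical values $\zeta'(0) = -\tfrac12\log(2\pi)$ and $\psi'(1) = \zeta(2) = \pi^2/6$, giving $F'(0^+) = \pi^2/6 - \tfrac12\log(2\pi)$. At $s=1^-$, I would expand each factor around its singularity: the Stieltjes expansion \eqref{zetaL} yields $\zeta'(s) = -(s-1)^{-2} - \gamma_1 + O(s-1)$, while combining the shift $\psi(1-s) = \psi(2-s) - (1-s)^{-1}$ with \eqref{digammaL} applied at argument $1-s$ yields $\psi'(1-s) = (s-1)^{-2} + \zeta(2) + O(1-s)$. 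The $(s-1)^{-2}$ pieces cancel, leaving $\lim_{s\to 1^-}F'(s) = \zeta(2) - \gamma_1$.

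For part (1), differentiate once more to obtain $F''(s) = \zeta''(s) - \psi''(1-s)$. Using the same idea, \eqref{zetaL} gives $\zeta''(s) = 2(s-1)^{-3} + g''(s)$, where $g(s) := \zeta(s) - (s-1)^{-1}$ is entire with $g''(s) = \sum_{j\geq 0}\tfrac{(-1)^j\gamma_{j+2}}{j!}(s-1)^j$; while the polygamma series $\psi''(y) = -2\sum_{n\geq 0}(n+y)^{-3}$, after isolating the $n=0$ term and using $(1-s)^{-3} = -(s-1)^{-3}$, gives $-\psi''(1-s) = -2(s-1)^{-3} + 2\sum_{n\geq 1}(n+1-s)^{-3}$. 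The two principal parts cancel, and I obtain
\[
F''(s) \;=\; g''(s) \;+\; 2\sum_{n=1}^{\infty}\frac{1}{(n+1-s)^3}, \qquad s\in(0,1).
\]
On $[0,1]$ the manifestly positive tail is bounded below by $2(\zeta(3)-1)$, while $|g''(s)| \leq \sum_{j\geq 0}|\gamma_{j+2}|/j!$ (since $|s-1|\leq 1$). Invoking sharp asymptotic estimates on the Stieltjes constants together with explicit numerical values of $\gamma_2,\gamma_3,\ldots$ for small indices, one shows that this upper bound on $|g''|$ is strictly less than the lower bound $2(\zeta(3)-1)$ on the tail, which gives $F''(s) > 0$.

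Part (2) then follows at once: by (1), $F'$ is strictly increasing on $(0,1)$, and by (3), $\lim_{s\to 0^+}F'(s) = \pi^2/6 - \tfrac12\log(2\pi) > 0$ (numerically about $0.726$), so $F'(s) > 0$ throughout $(0,1)$. The central obstacle is the quantitative estimate for $g''$ in part (1): classical bounds such as Berndt's $|\gamma_n| \leq (3+(-1)^n)(2n)!/\bigl(n^{n+1}(2\pi)^n\bigr)$ are only of the same order of magnitude as the positive tail $2(\zeta(3)-1)\approx 0.404$ and leave essentially no margin. One must therefore either invoke finer estimates of Matsuoka or Knessl--Coffey type, or split off the first several Stieltjes constants numerically and apply crude bounds only to the higher-index remainder; this is exactly where the ``sharp asymptotic estimates'' advertised in the introduction do the real work.
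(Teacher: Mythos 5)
Your overall architecture coincides with the paper's: part (3) is proved by exactly the same expansions (the paper does the $s\to0^+$ limit via the series for $\psi(1-s)$ rather than quoting $\psi'(1)=\zeta(2)$ directly, but this is cosmetic), and part (2) is deduced from (1) and (3) in the same one-line way. The genuine divergence is in part (1). Both proofs cancel the double poles and reach the same positive term $2\sum_{n\ge2}(n-s)^{-3}\ge 2(\zeta(3)-1)$ (your polygamma series versus the paper's evaluation of $\int_0^1 t^{-s}\log^2 t\,(1-t)^{-1}dt$ by geometric expansion), and both must then show that the second derivative of the entire part $g(s)=\zeta(s)-\tfrac1{s-1}$ is smaller in absolute value. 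You bound $g''$ by its Taylor coefficients at $s=1$, i.e.\ $|g''(s)|\le\sum_{j\ge0}|\gamma_{j+2}|/j!$, handled by tabulated values of the first few Stieltjes constants plus Berndt-type bounds for the tail; the paper instead represents $g''(s)$ as the oscillatory integral $\int_1^\infty(\{t\}-\tfrac12)(2-s\log t)(\log t)\,t^{-s-1}dt$ and controls it with an Euler--Maclaurin-style integration-by-parts identity (its Lemma \ref{fsup}), a telescoping argument, monotonicity of $\zeta,\zeta',\zeta''$ on $(2,3)$, and a Mathematica-assisted partial sum of $200$ terms, arriving at the bound $0.374<0.404$. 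Your route is arguably tighter (the true size of $|g''|$ on $(0,1)$ is of order $10^{-2}$, so splitting off even just $\gamma_2,\gamma_3$ numerically and using Berndt's bound for $n\ge4$ already gives roughly $0.08$, with ample margin), and you correctly diagnose that Berndt's bound alone narrowly fails; the trade-off is that you import tabulated Stieltjes constants and a nontrivial external estimate, whereas the paper's computation, though heavier, is self-contained modulo a finite numerical sum. To make your part (1) complete you would still need to actually carry out the finite numerical verification you sketch, but there is no obstruction to doing so.
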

	The following corollary, which follows from Theorem \ref{Fap}, implies Conjecture \ref{conj}.
\begin{cor}\label{cor1}
For $0<s<1$, we have the following inequality,
$$	s < F(s) < b'\,s + b $$
 where $b's+b$ is the best possible linear upper bound for $F(s)$ on $(0,1)$.
	\end{cor}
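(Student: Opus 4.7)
The first step is to pin down the boundary limits of $F$. From $\zeta(0)=-1/2$ and $\psi(1)=-\gamma_0$ we get $F(0^+)=\gamma_0-\tfrac{1}{2}=b$, while for $s\to 1^-$ the simple pole $1/(s-1)$ in the Laurent expansion \eqref{zetaL} cancels the pole of $-\psi(1-s)$ at $s=1$, leaving $F(1^-)=2\gamma_0=b+b'$.

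For the upper bound, observe that the line $L(s)=b's+b$ interpolates both boundary values: $L(0)=b$ and $L(1)=b+b'$. Strict convexity (Theorem \ref{Fap}(1)) then forces $F(s)<L(s)$ for every $s\in(0,1)$, since a strictly convex function lies strictly below any non-degenerate chord. This chord is also the best linear majorant: any linear $\alpha s+\beta\ge F(s)$ on $(0,1)$ must satisfy $\beta\ge b$ and $\alpha+\beta\ge b+b'$ upon taking one-sided limits, and the chord is the unique linear function achieving equality at both endpoints (so any other candidate majorant dominates it pointwise by linearity of the difference).

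For the lower bound $F(s)>s$, set $G(s):=F(s)-s$. Then $G$ is strictly convex by Theorem \ref{Fap}(1); the endpoints give $G(0^+)=b>0$ and $G(1^-)=2\gamma_0-1>0$; and Theorem \ref{Fap}(3) yields $G'(0^+)=\pi^2/6-\tfrac{1}{2}\log(2\pi)-1<0$ and $G'(1^-)=\pi^2/6-\gamma_1-1>0$. Consequently $G$ has a unique interior minimizer $s^*\in(0,1)$, and it suffices to show $G(s^*)>0$. The tangent-line bound at the left boundary, $G(s)\ge b+(F'(0^+)-1)s$, is positive for $s<s_1:=b/(1-F'(0^+))\approx 0.282$; the analogous tangent bound at the right gives positivity on $(s_2,1)$ with $s_2\approx 0.785$.

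The remaining interval $[s_1,s_2]$ has to be covered by an interior evaluation. I would verify $G(1/2)>0$ and $G'(1/2)>0$ using the closed forms $\psi(1/2)=-\gamma_0-2\log 2$ and $\psi'(1/2)=\pi^2/2$ together with effective numerical bounds on $\zeta(1/2)$ and $\zeta'(1/2)$; the tangent to $G$ at $s=1/2$ then extends positivity across the gap. This last step is the hard part, since $G(1/2)\approx 3\times 10^{-3}$ is very small, so sharp Euler--Maclaurin or alternating-series estimates for $\zeta(1/2)$ and $\zeta'(1/2)$ are needed. A structurally cleaner alternative would be an integral or series representation of $G(s)$ whose positivity is transparent, but from Theorem \ref{Fap} alone the tangent-line strategy above reduces the corollary to this controlled numerical check.
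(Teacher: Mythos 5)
Your proposal is correct and shares the paper's skeleton: compute the boundary limits $F(0^+)=\gamma_0-\tfrac12$ and $F(1^-)=2\gamma_0$, use strict convexity (Theorem \ref{Fap}(1)) to place $F$ strictly below the chord $b's+b$ for the upper bound, and study $G(s)=F(s)-s$ with the boundary derivative signs from Theorem \ref{Fap}(3) for the lower bound. Where you diverge is the endgame for $G>0$. The paper simply locates the unique minimizer numerically ($s_0\approx 0.484993$, $G(s_0)\approx 0.00306469$) ``via Mathematica'' and stops there. You instead use the convexity tangent-line inequality $G(s)\ge G(a)+G'(a)(s-a)$ at $a=0^+$ and $a=1^-$ to certify positivity outside roughly $[0.28,0.78]$, and then propose one interior evaluation at $s=1/2$ (where $\psi(1/2)$ and $\psi'(1/2)$ have closed forms) whose tangent line covers the remaining gap. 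This is structurally more rigorous than the paper's argument, since it reduces the entire lower bound to verified two-sided bounds on $\zeta(1/2)$ and $\zeta'(1/2)$ rather than trusting a numerically located minimum; the cost is that the margin is thin (the tangent at $1/2$ vanishes near $s\approx 0.246$ versus the left tangent's coverage up to $s\approx 0.282$, and $G(1/2)\approx 3\times 10^{-3}$), so the required precision on $\zeta(1/2)\approx -1.46035$ and $\zeta'(1/2)\approx -3.92265$ must actually be carried out — you correctly flag this but leave it as a sketch, which is the one incomplete step. Your justification of ``best possible'' (any linear majorant must dominate the chord at both endpoints, hence pointwise) is also more complete than the paper's, which only checks agreement of the limits at $s\to 1^-$.
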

	\begin{rem}
		Figure \ref{C4P3} illustrates the behavior of the functions involved in the conjectured
		inequality
		\[
		s < F(s) < b'\,s + b \quad (0<s<1).
		\]
		The plot serves to provide intuition for the validity of the conjecture
		and to visualize the bounds, while the proof relies entirely on analytic arguments.
		\end{rem}
		
		\begin{figure}\label{C4P3}
			\centering
			\includegraphics[height=8cm]{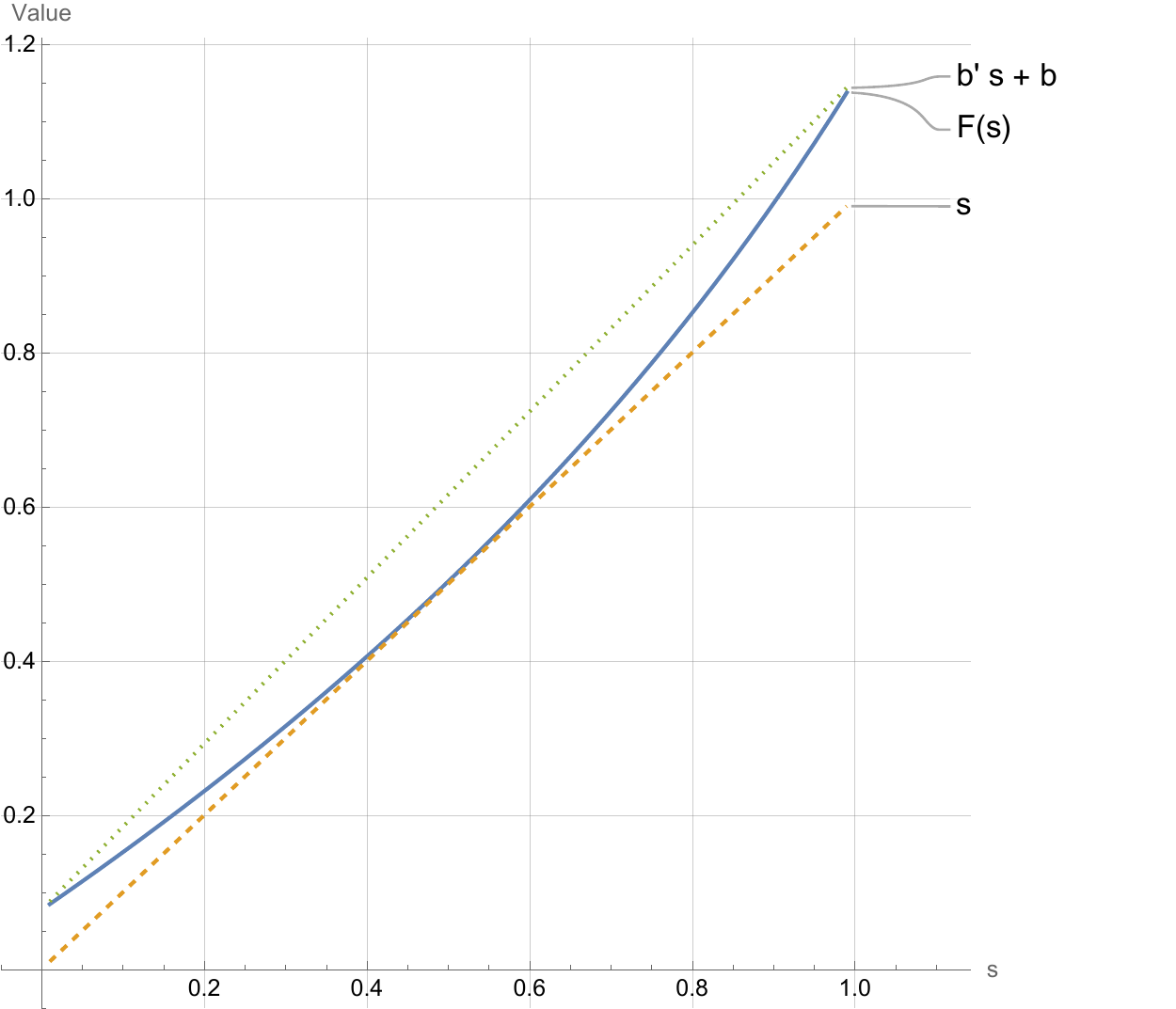}
			\caption{Comparison of $F(s)$ with linear bounds for \\$0<s<1$.}
		\end{figure}
		\begin{rem}
		It has been observed in the literature that inequalities of the type
		considered in this conjecture are related to certain elementary criteria
		for the Riemann hypothesis.
		In particular, assuming the conjecture holds, one may reformulate aspects
		of the Riemann hypothesis in terms of the behavior of $\zeta(x)$ on the
		interval $0<x<1$.
		We do not pursue this direction here and refer the reader to \cite{y} and \cite{ky} for a detailed analysis.
		\end{rem}
	\section{Preliminaries}
Before proving the main theorem, we establish several preliminary results. 

		\begin{lem}[\cite{kt}]\label{digammae} 
		For $\Re (s)>0$, the digamma function admits the integral representation
		\begin{equation*}
			\psi(s)+\gamma_0
			= \int_0^1 \frac{1 - x^{s-1}}{1-x}\,dx.
		\end{equation*}
	\end{lem}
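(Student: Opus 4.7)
My plan is to derive the integral representation directly from the classical partial-fraction series for the digamma function. Starting from the Weierstrass product for $\Gamma$ and taking a logarithmic derivative gives
$$\psi(s) + \gamma_0 = -\frac{1}{s} + \sum_{n=1}^{\infty}\left(\frac{1}{n} - \frac{1}{n+s}\right),$$
valid on $\mathbb{C}\setminus\{0,-1,-2,\dots\}$. The strategy is to turn each reciprocal into an elementary integral so that the series collapses inside the integral sign via a geometric sum.

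Concretely, I would write $\frac{1}{n+s}=\int_0^1 x^{n+s-1}\,dx$ (valid for $\Re(s)>-n$) and $\frac{1}{n}=\int_0^1 x^{n-1}\,dx$, so that formally
$$\psi(s)+\gamma_0=\int_0^1\left[-x^{s-1}+\sum_{n=1}^{\infty}\bigl(x^{n-1}-x^{n+s-1}\bigr)\right]dx.$$
Summing the two geometric series yields $\frac{1}{1-x}-\frac{x^s}{1-x}=\frac{1-x^s}{1-x}$, and the elementary simplification $-x^{s-1}(1-x)+1-x^s=1-x^{s-1}$ collapses the bracket to $\frac{1-x^{s-1}}{1-x}$, giving the claim.

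The one step that demands genuine care is the interchange of sum and integral, since the terms do not have fixed sign. I would handle this by dominated convergence applied to the partial sums $S_N(x)$: a direct computation gives $S_N(x)-\frac{1-x^{s-1}}{1-x}=-\frac{x^N(1-x^s)}{1-x}$, which is bounded uniformly in $N$ by $\frac{|1-x^s|}{1-x}$, an integrable majorant on $(0,1)$ for $\Re(s)>0$. This is also precisely where the hypothesis $\Re(s)>0$ enters: it guarantees integrability of $x^{s-1}$ near $x=0$, while boundedness of $(1-x^{s-1})/(1-x)$ near $x=1$ comes from the common simple zero of numerator and denominator there. I expect this convergence argument to be the only nontrivial part; the rest is algebraic manipulation.
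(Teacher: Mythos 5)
The paper does not prove this lemma at all: it is quoted from the literature (Kanemitsu--Tsukada) and used as a black box. Your argument is therefore not comparable to a proof in the paper, but it is a correct, self-contained derivation and would serve as a legitimate replacement for the citation. The starting series $\psi(s)+\gamma_0=-\tfrac1s+\sum_{n\ge1}\bigl(\tfrac1n-\tfrac1{n+s}\bigr)$ is the standard consequence of the Weierstrass product; the algebra collapsing the bracket to $\tfrac{1-x^{s-1}}{1-x}$ checks out; and you correctly isolate the only delicate point, the interchange of sum and integral. Your identity $S_N(x)-\tfrac{1-x^{s-1}}{1-x}=-\tfrac{x^N(1-x^s)}{1-x}$ is exactly right, and since $|1-x^s|/(1-x)$ is in fact \emph{bounded} on $(0,1)$ for $\Re(s)>0$ (it tends to $|s|$ as $x\to1^-$ and is $O(1)$ near $0$), the error integral is $O(1/(N+1))$ and one does not even need dominated convergence -- a direct estimate suffices. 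You also correctly flag that $\Re(s)>0$ is used twice: for $\int_0^1 x^{s-1}\,dx=1/s$ and for integrability of the limit integrand near $x=0$. No gaps.
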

	\begin{lem}[\cite{sc}]\label{Ri}
	When $\Re(s)>-1$ holds, the relation
	\begin{equation*}
		\zeta(s)+\frac{1}{1-s}
		= \frac12 - s \int_1^{\infty} \frac{\{t\}-\frac12}{t^{s+1}}\,dt
	\end{equation*}
	holds, where $\{t\}$ denotes the fractional part of $t$.
	\end{lem}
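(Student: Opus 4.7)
The plan is to prove the identity first on the half-plane $\Re(s)>1$ via Abel summation (equivalently, the simplest case of the Euler--Maclaurin formula), and then extend to $\Re(s)>-1$ by analytic continuation, using an integration by parts to see that the integral on the right makes sense on the larger strip.

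First I would fix $\Re(s)>1$ and rewrite the finite sum $\sum_{n=1}^N n^{-s}$ as a Riemann--Stieltjes integral against $d\lfloor t\rfloor$ on $[1,N]$. Integration by parts gives
\begin{equation*}
\sum_{n=1}^{N} \frac{1}{n^{s}}
= \frac{\lfloor N\rfloor}{N^{s}}
+ s\int_{1}^{N} \frac{\lfloor t\rfloor}{t^{s+1}}\,dt.
\end{equation*}
Writing $\lfloor t\rfloor = t-\{t\}$ and evaluating $s\int_{1}^{N} t^{-s}\,dt = s(N^{1-s}-1)/(1-s)$, I would let $N\to\infty$ (using $\Re(s)>1$ so that $\lfloor N\rfloor/N^{s}\to 0$ and $N^{1-s}\to 0$) to obtain
\begin{equation*}
\zeta(s) = \frac{s}{s-1} - s\int_{1}^{\infty} \frac{\{t\}}{t^{s+1}}\,dt
= 1 + \frac{1}{s-1} - s\int_{1}^{\infty} \frac{\{t\}}{t^{s+1}}\,dt.
\end{equation*}
Next, for $\Re(s)>0$ one has the elementary identity $s\int_{1}^{\infty} \tfrac{1/2}{t^{s+1}}\,dt = \tfrac{1}{2}$, so subtracting it from the previous display and using $\tfrac{1}{1-s} = -\tfrac{1}{s-1}$ yields the claimed formula for $\Re(s)>1$.

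To extend the formula to $\Re(s)>-1$, the key point is that $\{t\}-\tfrac12$ is the first periodic Bernoulli function $P_1(t)$, which is bounded with mean zero over each unit interval. Consequently its antiderivative $Q(t) := \int_{1}^{t}\bigl(\{u\}-\tfrac12\bigr)\,du$ is a bounded periodic function with $Q(1)=0$. Integration by parts then gives
\begin{equation*}
\int_{1}^{N} \frac{\{t\}-\tfrac12}{t^{s+1}}\,dt
= \frac{Q(N)}{N^{s+1}} + (s+1)\int_{1}^{N} \frac{Q(t)}{t^{s+2}}\,dt,
\end{equation*}
and the right-hand side converges as $N\to\infty$ for every $s$ with $\Re(s)>-1$. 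Thus the integral $\int_{1}^{\infty}(\{t\}-\tfrac12)t^{-s-1}\,dt$ defines a holomorphic function of $s$ on $\{\Re(s)>-1\}$. Since the identity holds on the subregion $\Re(s)>1$ and both sides are meromorphic on $\Re(s)>-1$ (the pole at $s=1$ cancels on the left because $\tfrac{1}{1-s}$ exactly removes the pole of $\zeta$), the principle of analytic continuation promotes it to the full strip, yielding the stated representation.

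The only step that requires genuine care is the analytic continuation argument, in particular controlling the tail of the integral uniformly for $\Re(s)>-1+\varepsilon$ so that the integral is holomorphic there; all other steps are bookkeeping with Abel summation and elementary limits.
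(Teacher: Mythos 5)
Your proposal is correct: the Abel-summation identity on $\Re(s)>1$, the elementary evaluation $s\int_1^\infty \tfrac{1}{2}t^{-s-1}\,dt=\tfrac12$, and the continuation to $\Re(s)>-1$ via the bounded, mean-zero antiderivative $Q(t)=\tfrac12(\{t\}^2-\{t\})$ of $\{t\}-\tfrac12$ all check out, and together they give a complete proof. The paper itself supplies no argument for this lemma --- it is quoted directly from the reference \cite{sc} --- and your derivation is precisely the standard one found there, so there is nothing to contrast.
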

	\begin{lem}\label{fsup}
		Let $f\in C^{2}([0,1])$. Then
		\[
		\int_0^1 \left(u-\frac12\right) f(u)\,du
		=\frac{(f(1)-f(0))}{8}-\frac{(f'(1)+f'(0))}{48}+ \int_0^1 \frac{\left(u-\frac12\right)^3}{6}f''(u)\,du.
		\]
	\end{lem}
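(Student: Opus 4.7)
The identity is essentially a second-order Taylor-type expansion of $\int_0^1 (u-\tfrac12)f(u)\,du$ with the weight $u-\tfrac12$ playing the role of a Bernoulli kernel, and the natural route is two successive integrations by parts in which the antiderivatives of the weight are chosen so that the boundary data line up with the stated constants $\tfrac18$ and $\tfrac1{48}$.

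First I would integrate by parts treating $u-\tfrac12$ as the derivative of $\tfrac12(u-\tfrac12)^2$. This particular antiderivative is convenient because it takes the same value $\tfrac18$ at both endpoints $u=0$ and $u=1$, so the boundary term collapses cleanly to $\tfrac18\bigl(f(1)-f(0)\bigr)$, and the remainder becomes
\[
-\int_0^1 \frac{(u-\tfrac12)^2}{2}\,f'(u)\,du.
\]

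Then I would integrate by parts a second time, now viewing $\tfrac12(u-\tfrac12)^2$ as the derivative of $\tfrac16(u-\tfrac12)^3$. This antiderivative is antisymmetric about $u=\tfrac12$, with values $\pm\tfrac1{48}$ at the endpoints, so it produces a symmetric boundary contribution $\tfrac1{48}\bigl(f'(1)+f'(0)\bigr)$, while leaving exactly the remainder $\int_0^1 \tfrac16(u-\tfrac12)^3 f''(u)\,du$ appearing on the right-hand side. Carrying the minus sign from the first step through to the second, the three contributions recombine into the claimed identity.

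The hypothesis $f\in C^2([0,1])$ makes both integrations by parts unconditionally valid, so there is no genuine obstacle; the only point of care is the deliberate choice of antiderivatives, which is what forces the precise coefficients $\tfrac18$ and $\tfrac1{48}$ to emerge. Any other choice of integration constants would introduce additional boundary terms that must subsequently be cancelled, so one could equivalently describe the proof as fixing the antiderivatives by requiring the resulting boundary expressions to be either difference-type ($f(1)-f(0)$) or sum-type ($f'(1)+f'(0)$), in line with the parity of the associated polynomial in $u-\tfrac12$.
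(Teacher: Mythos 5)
Your proof is correct and is exactly the paper's argument: the paper's proof consists of the single line ``The result follows directly by integration by parts,'' and your two integrations by parts with the antiderivatives $\tfrac12(u-\tfrac12)^2$ and $\tfrac16(u-\tfrac12)^3$ fill in precisely the computation intended, with the boundary values $\tfrac18$ (equal at both endpoints) and $\pm\tfrac1{48}$ (antisymmetric) producing the difference and sum structures as you describe. No gaps.
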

	
	\begin{proof}
	The result follows directly by integration by parts.
	\end{proof}
\begin{lem}\label{converge uniformly}
	The improper integrals
	\[
	\int_1^{\infty} \frac{\{t\}-\frac12}{t^{s+1}}\,dt
	\quad\text{and}\quad
	\int_0^1 \frac{1-t^{-s}}{1-t}\,dt
	\]
	converge uniformly for $s$ on any closed subinterval of $(0,1)$.
	Moreover, for any positive integer $m$, the integrals obtained by differentiating $m$ times with respect to $s$ also converge uniformly on such subintervals.
\end{lem}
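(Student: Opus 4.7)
The plan is to apply the Weierstrass M-test on any fixed closed subinterval $[a,b]\subset(0,1)$. For each integral I will produce an $s$-independent majorant for the integrand (and for its $m$-th $s$-derivative) that is integrable over the relevant domain; the uniform convergence then follows at once, with the $m=0$ case handling the integrals themselves.

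For the first integral, the $m$-th derivative with respect to $s$ of the integrand is $(-\log t)^m(\{t\}-\tfrac12)t^{-(s+1)}$, whose absolute value is bounded for $s\in[a,b]$ and $t\ge 1$ by $\tfrac12(\log t)^m t^{-(a+1)}$. Since $(\log t)^m=o(t^{a/2})$ as $t\to\infty$, this dominating function is integrable on $[1,\infty)$, and the M-test yields the desired uniform convergence.

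For the second integral, the $m$-th $s$-derivative of the integrand equals $-(-\log t)^m t^{-s}/(1-t)$, and I would split $(0,1)$ at $t=\tfrac12$. On $(0,\tfrac12]$ the bounds $1/(1-t)\le 2$ and $t^{-s}\le t^{-b}$ give the $L^1$ majorant $2(-\log t)^m t^{-b}$, integrable because $b<1$. On $[\tfrac12,1)$ the factor $t^{-s}$ is uniformly bounded by $2^b$; for $m\ge1$ the Taylor expansion $\log t=-(1-t)+O((1-t)^2)$ yields $(\log t)^m/(1-t)=O((1-t)^{m-1})$, which is bounded on $[\tfrac12,1)$. The case $m=0$ must be handled directly: applying the mean value theorem to $u\mapsto u^{-s}$ gives $|1-t^{-s}|\le s\cdot 2^{s+1}(1-t)\le 4(1-t)$ for $t\in[\tfrac12,1)$ and $s\in[a,b]$, so the integrand itself is bounded by $4$ there.

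The main (mild) obstacle lies in the second integral, whose integrand is singular at both endpoints of $(0,1)$ and must be split, and near $t=1$ the cases $m=0$ and $m\ge 1$ require slightly different arguments to exploit the cancellation between the logarithmic factor and the pole $1/(1-t)$. Once these pointwise majorants are in place, the Weierstrass M-test delivers uniform convergence of every tail, every head, and hence of each of the four families of improper integrals on $[a,b]$, completing the proof.
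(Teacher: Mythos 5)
Your proposal is correct and follows essentially the same route as the paper: a Weierstrass M-test on $[a,b]\subset(0,1)$ with an $s$-independent integrable majorant for each integrand and its $s$-derivatives, splitting the second integral at $t=\tfrac12$. In fact you are more careful than the paper at the endpoint $t=1$: the paper's written majorant $\frac{1+t^{-s}}{1-t}\ll 1$ on $[\tfrac12,1)$ is literally false (the left-hand side blows up as $t\to 1^-$), and your mean value theorem bound $|1-t^{-s}|\le s\,2^{s+1}(1-t)$, together with the $(\log t)^m/(1-t)=O((1-t)^{m-1})$ estimate for $m\ge 1$, supplies the cancellation the paper's argument implicitly relies on; you also treat all $m$ rather than only $m=1,2$.
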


		\begin{proof}
			Fix $0<a<b<1$. We prove uniform convergence on $[a,b]$.
				Let
			\[
			I_1(s):=\int_{1}^{\infty}\frac{\{t\}-\frac12}{t^{s+1}}\,dt,
			\qquad
			I_2(s):=\int_{0}^{1}\frac{1-t^{-s}}{1-t}\,dt.
			\]
			\medskip
			\noindent
			\emph{Uniform convergence of $I_1$.}
			Since $|\{t\}-\tfrac12|\le \tfrac12$, for $t\ge 1$ and $s\in[a,b]$,
			\[
			\left|\frac{\{t\}-\frac12}{t^{s+1}}\right|
			\le \frac12\,t^{-a-1},
			\]
			and the majorant $t^{-a-1}$ is integrable on $[1,\infty)$. Hence
			$\int_1^\infty (\{t\}-\frac12)t^{-s-1}\,dt$ converges uniformly for $s\in[a,b]$
			by the Cauchy criterion for improper integrals.
			
			\medskip
			\noindent
			\emph{Uniform convergence of $I_2$.}
			For $t\in(0,1)$ and $s\in[a,b]$,
			\[
			\left|\frac{1-t^{-s}}{1-t}\right|
			\le \frac{1+t^{-s}}{1-t}.
			\]
			Splitting the integral at $t=\tfrac12$, we have
			\[
			\frac{1+t^{-s}}{1-t}
			\ll
			\begin{cases}
				1+t^{-b}, & t\in(0,\tfrac12],\\[4pt]
				1, & t\in[\tfrac12,1),
			\end{cases}
			\]
			where the implied constant depends only on $a,b$.
			Since $b<1$, both majorants are integrable on the corresponding intervals,
			and the uniform convergence of $I_2(s)$ on $[a,b]$ follows.
			
			\medskip
			\noindent
			\emph{Derivatives with respect to $s$.}
			For $m=1,2$, differentiating under the integral sign yields integrands of the form
			\[
			(\log t)^m\frac{\{t\}-\frac12}{t^{s+1}}
			\quad\text{and}\quad
			\frac{(\log t)^m\,t^{-s}}{1-t}.
			\]
			Arguing as above, these are dominated on $[a,b]$ by integrable functions
			$t^{-a-1}(\log t)^m$ on $[1,\infty)$ and $(\log(1/t))^m t^{-b}$ on $(0,1)$,
			respectively. Hence the corresponding improper integrals converge uniformly
			on $[a,b]$.
		\end{proof}
	
			We recall the following lemma from \cite[p.~67]{pp}.
		\begin{lem}\label{Dd}
			Let
			\[
			F(s)=D(f)(s):=\sum_{n=1}^{\infty} \frac{f(n)}{n^{s}}
			\]
			be a Dirichlet series convergent in the half-plane $\Re(s)>\sigma$.
			Then for every integer $k\ge 1$, the series $F(s)$ is $k$ times termwise differentiable
			in the half-plane $\Re(s)>\sigma$, i.e.,
			\[
			F^{(k)}(s)
			=
			(-1)^k \sum_{n=1}^{\infty} \frac{f(n)(\log n)^k}{n^{s}},
			\qquad \Re(s)>\sigma.
			\]
		\end{lem}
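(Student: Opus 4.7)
The plan is to deduce the claim from two classical ingredients: (i) uniform convergence of the partial sums $F_N(s):=\sum_{n=1}^{N} f(n)/n^{s}$ on every compact subset of the open half-plane $\{\Re(s)>\sigma\}$, and (ii) the Weierstrass theorem on locally uniform limits of holomorphic functions.

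First I would fix a compact set $K\subset\{\Re(s)>\sigma\}$, pick $\sigma_1$ with $\sigma<\sigma_1<\min_{s\in K}\Re(s)$, and use convergence of $\sum_{n\geq 1} f(n)/n^{\sigma_1}$, which holds because $\sigma$ is the abscissa of convergence and $\sigma_1>\sigma$. Setting $a_n:=f(n)/n^{\sigma_1}$ and $b_n(s):=n^{\sigma_1-s}$, I would apply Abel summation to the tail
\[
\sum_{n>N}\frac{f(n)}{n^{s}} \;=\; \sum_{n>N} a_n\, b_n(s).
\]
The partial sums $A_M:=\sum_{n\leq M} a_n$ are Cauchy (hence the tails $A_M-A_N$ tend to $0$ uniformly in $M>N$ as $N\to\infty$), while for $s\in K$, writing $\delta:=\min_{s\in K}\Re(s)-\sigma_1>0$, the weights $b_n(s)$ have uniformly summable variation with $\sum_{n>N}|b_n(s)-b_{n+1}(s)|\ll N^{-\delta}$. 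Summation by parts then yields $|F(s)-F_N(s)|\to 0$ uniformly on $K$, so $F_N\to F$ locally uniformly on $\{\Re(s)>\sigma\}$.

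Second, since each $F_N$ is entire, Weierstrass's theorem gives that $F$ is holomorphic on $\{\Re(s)>\sigma\}$ and that $F_N^{(k)}\to F^{(k)}$ locally uniformly for every $k\geq 1$. Direct differentiation term by term produces $F_N^{(k)}(s)=(-1)^k\sum_{n=1}^{N} f(n)(\log n)^k/n^{s}$, and passing to the limit yields the claimed identity. Alternatively, one may bypass Weierstrass via Cauchy's integral formula: for a small circle $C$ around a fixed $s_0\in\{\Re(s)>\sigma\}$, uniform convergence of $F_N$ on $C$ allows interchanging sum and integral in $F^{(k)}(s_0)=\tfrac{k!}{2\pi i}\oint_C F(w)(w-s_0)^{-k-1}\,dw$, and evaluating each term-integral by the derivative $\tfrac{d^k}{ds^k}n^{-s}=(-\log n)^k n^{-s}$ gives the same formula.

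The main obstacle is the uniform convergence step, because $\sigma$ is the abscissa of (possibly only conditional) convergence rather than of absolute convergence, so one cannot simply majorize $|f(n)|/n^{\Re(s)}$; the Abel/partial-summation argument is essential here. Once locally uniform convergence is secured, the holomorphicity of $F$ and the termwise differentiation formula are immediate from standard complex analysis, and induction on $k$ is not even required since Weierstrass delivers all derivatives simultaneously.
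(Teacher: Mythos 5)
Your argument is correct and complete: locally uniform convergence of the partial sums via Abel summation against the weights $n^{\sigma_1-s}$ (which is exactly where one must work, since convergence at $\Re(s)>\sigma$ need not be absolute), followed by Weierstrass's theorem to transfer holomorphy and all derivatives to the limit. The paper itself offers no proof of this lemma --- it is quoted verbatim from \cite{pp} as a known fact --- so there is nothing to compare against; your write-up is the standard textbook proof and would serve as a self-contained justification. The only cosmetic point is that the hypothesis only asserts convergence in the half-plane $\Re(s)>\sigma$, not that $\sigma$ is the abscissa of convergence, but your argument uses nothing more than convergence at the real point $\sigma_1>\sigma$, which the hypothesis supplies directly.
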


		 The Riemann zeta function converges on $\Re(s)>1$. By Lemma \ref{Dd}, we obtain the following proposition.
			\begin{prop}\label{Rmoto}
			If  s is the complex number satisfying $\Re(s)>1$, then
			$$ \sum_{n=1}^{\infty}\frac{\log(n)}{n^s}=-\zeta'(s)\quad,\quad\sum_{n=1}^{\infty}\frac{\log^2(n)}{n^s}=\zeta''(s)\quad,\quad\sum_{n=1}^{\infty}\frac{\log^3(n)}{n^s}=-\zeta'''(s). $$
			Moreover, when $s\in \mathbb{R}$ and $2<s<3$,
			$$\zeta'(s)<0 \quad,\quad \zeta''(s)>0\quad,\quad \zeta'''(s)<0.$$
			\end{prop}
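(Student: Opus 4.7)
The proposition has two parts, and both reduce quickly to what is already in hand.

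For the first part, I would simply apply Lemma \ref{Dd} to the Dirichlet series defining $\zeta(s)$, i.e.\ take $f(n)\equiv 1$ with abscissa of convergence $\sigma=1$. Termwise differentiation $k$ times yields
\[
\zeta^{(k)}(s)=(-1)^k\sum_{n=1}^{\infty}\frac{(\log n)^k}{n^{s}},\qquad \Re(s)>1,
\]
and substituting $k=1,2,3$ gives the three identities stated.

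For the second part, I would note that for real $s>1$ every term of $\sum_{n=1}^{\infty}(\log n)^k n^{-s}$ is nonnegative, and strictly positive for $n\ge 2$, so the three sums above are all positive. Rearranging the sign factor $(-1)^k$ in the identity $\zeta^{(k)}(s)=(-1)^k\sum_{n\ge 1}(\log n)^k n^{-s}$ then gives $\zeta'(s)<0$, $\zeta''(s)>0$, $\zeta'''(s)<0$. This is obviously valid for every real $s>1$; the restriction $2<s<3$ in the statement is presumably just the range needed later in the paper.

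There is no real obstacle here: once Lemma \ref{Dd} is granted, both the identities and the sign assertions are immediate, so I would keep the proof to two short sentences.
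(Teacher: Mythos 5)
Your proposal is correct and follows exactly the paper's route: the paper derives the proposition by applying Lemma \ref{Dd} to the Dirichlet series for $\zeta(s)$ (with $f(n)\equiv 1$, $\sigma=1$), and the sign assertions then follow from the positivity of each term for real $s>1$, just as you argue. Your observation that the signs hold for all real $s>1$, with $2<s<3$ being merely the range used later, is also accurate.
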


\section{Proof of our Main result}
We now turn to the proof of our main results.

\begin{proof}[Proof of Theorem \ref{Fap}]
	We prove the assertions in the order $(1)\to(3)\to(2)$.
	
	\medskip
	\noindent
	\textbf{Proof of (1).}
Fix $ s \in(0,1)$. Choose $\epsilon_s>0$ sufficiently small such that $\epsilon_s<s<1-\epsilon_s$.
By Lemma \ref{digammae} and \ref{Ri},

$$F(s)=\frac12 - s \int_1^{\infty} \frac{\{t\}-\frac12}{t^{s+1}}\,dt+\frac{1}{s-1}
-\int_0^1 \frac{1 - t^{-s}}{1-t}\,dt+\gamma_0.$$

Since the integrands, together with their partial derivatives with respect to $s$, are continuous and converge uniformly  for $s$ on the interval $\epsilon_s\le s\le 1-\epsilon_s$ by Lemma \ref{converge uniformly}, the following computation is justified.
Differentiate term by term and we obtain
\[
F'(s)=
\int_{1}^{\infty}\frac{(\{t\}-\frac12)(s\log t-1)}{t^{s+1}}\,dt
-\frac1{(s-1)^2}
-\int_{0}^{1}\frac{t^{-s}\log(t) }{1-t}\,dt.
\]
Differentiate again and we obtain,
\[
F''(s)=
\int_{1}^{\infty}\frac{(\{t\}-\frac12)(2-s\log(t) )(\log(t) )}{t^{s+1}}\,dt
+\frac2{(s-1)^3}
+\int_{0}^{1}\frac{t^{-s}\log^2 (t)}{1-t}\,dt.
\]
Let $$J(s):=\int_{1}^{\infty}\frac{(\{t\}-\frac12)(2-s\log t)(\log t)}{t^{s+1}}\,dt$$
and $$P(s):=\frac2{(s-1)^3}
+\int_{0}^{1}\frac{t^{-s}\log^2 t}{1-t}\,dt.$$
To prove the convexity of $F(s)$, it suffices to show that $0<P(s)<\infty $ and $|J(s)|<P(s)$.

In fact, using the geometric series expansion
$$\frac{1}{1-t}=\sum_{n=0}^{\infty}t^n,\qquad |t|<1,$$
we may rewrite \(P(s)\) as follows,
\begin{align*}
	P(s)&=\frac2{(s-1)^3}
	+\int_{0}^{1}\frac{t^{-s}\log^2 (t)}{1-t}\,dt\\
	&=\frac2{(s-1)^3}
	+\sum_{n=0}^{\infty}\int_{0}^{1}\log^2(t) t^{n-s}\,dt\\
	&=\frac2{(s-1)^3}
	+\sum_{n=0}^{\infty}\frac{2}{(n-s+1)^3}\\
	&=\sum_{n=2}^{\infty}\frac{2}{(n-s)^3},\\
	\end{align*}
	where the penultimate step uses the standard integral formula 
	$$\int_{0}^{1}\log^2(t) t^{a}\,dt=\frac{2}{(a+1)^3}.$$
	
	Consequently, for \(0<s<1\),
	$$P(s)=\sum_{n=2}^{\infty}\frac{2}{(n-s)^3}\ge 2\sum_{n=2}^{\infty}\frac{1}{n^3}=
	2(\zeta(3)-1)\approx0.40411380632.$$
	
	Meanwhile,
	$$P(s)=\sum_{n=2}^{\infty}\frac{2}{(n-s)^3}\le\sum_{n=2}^{\infty}\frac{2}{(n-1)^3}=2\zeta(3)<\infty.$$
	To complement this estimate, we now consider the integral term $J(s)$.
	\begin{align*}
J(s):&=\int_{1}^{\infty}\frac{(\{t\}-\frac12)(2-s\log(t))(\log(t))}{t^{s+1}}\,dt\\
&=\sum_{n=1}^{\infty}\int_{n}^{n+1}\frac{(\{t\}-\frac12)(2-s\log (t))(\log t)}{t^{s+1}}\,dt\\
&=\sum_{n=1}^{\infty}\int_{0}^{1}\frac{(t-\frac12)(2-s\log (t+n))(\log (t+n))}{(t+n)^{s+1}}\,dt.\\
\end{align*}
Let $$h_n(t)=\frac{(2-s\log (t+n))(\log (t+n))}{(t+n)^{s+1}}.$$
It is easy to compute that
$$h_n'(t)=\frac{2-(4s+2)\log(t+n)+s(s+1)\log^2(t+n)}{(t+n)^{s+2}},$$
and
$$h_n''(t)=\frac{-6(s+1)+(6s^2+12s+4)\log(t+n)-s(s+1)(s+2)\log^2(t+n)}{(t+n)^{s+3}}.$$

So by Lemma \ref{fsup},
	\begin{align*}
	J(s)&=\frac{1}{8}\sum_{n=1}^{\infty}(h_n(1)-h_n(0))-\frac{1}{48}\sum_{n=1}^{\infty}(h'_n(1)+h'_n(0))\\
	&+\frac{1}{6}\sum_{n=1}^{\infty}\int_{0}^{1}(t-\frac{1}{2})^3h_n''(t)\,dt\\
	&:=I_1+I_2+I_3.\\
\end{align*}
We proceed to estimate the three terms separately.

\medskip
\noindent\textbf{Estimate of \(I_1\).}
Let $$g(x)=\frac{(2-s\log(x))\log(x)}{x^{s+1}}.$$ It is easy to see that $h_n(0)=g(n)$, $h_n(1)=g(n+1)$. Then
\begin{align*}
I_1=\frac{1}{8}\sum_{n=1}^{\infty}(h_n(1)-h_n(0))&=\frac{1}{8}\lim\limits_{N\rightarrow\infty}\sum_{n=1}^{N}(h_n(1)-h_n(0))\\
&=\frac{1}{8}\lim\limits_{N\rightarrow\infty}\sum_{n=1}^{N}(g(n+1)-g(n))=-\frac{1}{8}g(1)=0
\end{align*}

\medskip
\noindent\textbf{Estimate of \(I_2\).}
Let $$g_1(x)=\frac{2-(4s+2)\log x+s(s+1)\log^2 x}{x^{s+2}}.$$ It is easy to see that $h_n'(0)=g_1(n)$, $h_n'(1)=g_1(n+1)$. Then by Proposition \ref{Rmoto},
\begin{align*}
\sum_{n=1}^{\infty}(h'_n(1)+h'_n(0))&=g_1(1)+2\sum_{n=2}^{\infty}g_1(n)\\
&=4\zeta(s+2)-2+(8s+4)\zeta'(s+2)+s(s+1)\zeta''(s+2),
\end{align*}
and the functions $\zeta$  and $\zeta'' $ are strictly decreasing on $(2,3)$ and $\zeta'$ is strictly increasing $(2,3)$. In addition, we have $\zeta>0$ and $\zeta''>0$, whereas $\zeta'<0$ on this interval. Then
\begin{align*}
	&4\zeta(s+2)-2+(8s+4)\zeta'(s+2)+s(s+1)\zeta''(s+2)\\
	&\le 4\zeta(2)-2+4\zeta'(3)+2\zeta''(2)
	\approx7.765791.\\
\end{align*}
Hence
$$|I_2|\le \frac{1}{48}\sum_{n=1}^{\infty}(h'_n(1)+h'_n(0))\le \frac{7.765791}{48}=0.161787$$

\medskip
\noindent\textbf{Estimate of \(I_3\).}
Denote $$\frac{1}{6}\int_{0}^{1}(t-\frac{1}{2})^3h_n''(t)\,dt$$ by $E_n(s)$. Then $I_3=\sum\limits_{n=1}^{\infty}E_n$. We have
	\[
|E_n(s)|
\le \sup_{t\in [0,1]}|h_n''(t)|\int_0^1\left|\frac{(u-\frac12)^3}{6}\right|du
=\frac{1}{192}\,\sup_{t\in [0,1]}|h_n''(t)|,
\]
since $\int_0^1|(u-\frac12)^3|\,du=\frac1{32}$.

One can notice that
\begin{align*}
	&\sum_{n=1}^{\infty}\sup_{t\in [0,1]}|h_n''(t)|\\
	&=\sum_{n=1}^{\infty}\sup_{t\in [0,1]}\left|\frac{-6(s+1)+(6s^2+12s+4)\log(t+n)-s(s+1)(s+2)\log^2(t+n)}{(t+n)^{s+3}}\right|\\
	&\le \sum_{n=1}^{\infty}\frac{6\log^2(1+n)+22\log(1+n)+12}{n^3}.
	\end{align*}
	Hence
	\[
	\sum_{n=1}^{\infty}|E_n(s)|
	\le \frac{1}{192}\sum_{n=1}^{\infty}\frac{6\log^2(n+1)+22\log(n+1)+12}{n^{3}}
	=: \frac{1}{192}\,\Sigma.
	\]
	
	To estimate $\Sigma$, fix $N\ge2$ and split $\Sigma=\Sigma_{\le N}+\Sigma_{>N}$, where
	\[
	\Sigma_{\le N}:=\sum_{n=1}^{N}\frac{6\log^2(n+1)+22\log(n+1)+12}{n^{3}}.
	\]
	For the tail, note that for $x\ge1$ we have $\log(x+1)\le \log(2x)=\log x+\log 2$. Then 
	\begin{align*}
	\Sigma_{>N}&\le \int_{N}^{\infty} \frac{6\log^2(x+1)+22\log(x+1)+12}{x^{3}}\,dx\\
	&\le \int_{N}^{\infty}\frac{6(\log x+\log2)^2+22(\log x+\log2)+12}{x^{3}}\,dx.\\
	\end{align*}
	The latter integral can be evaluated explicitly by integration by parts, giving a closed form
	upper bound of the shape
	\[
	\Sigma_{>N}\le \frac{3(\log N)^2 + (14+6\log (2))\log N + (13+14\log (2)+3\log^2(2)) }{N^{2}}.
	\]

	Taking $N=200$, via Mathematica, a direct computation of $\Sigma_{\le 200}$ together with the above
	explicit tail estimate yields
	\[
	\Sigma<40.697,
	\]
	and therefore
	\[
	I_3\le \sum_{n=1}^{\infty}|E_n(s)|
	\le \frac{\Sigma}{192}<\frac{40.697}{192}\approx0.211964,
	\]
	uniformly for all $0<s<1$.
	
	All in all, 
	\begin{align*}
	|J(s)|&\le|I_1|+|I_2|+|I_3|\\
	&\le 0.161787+0.211964=0.373751<0.40411380632\le P(s).\\
	\end{align*}
	This completes the proof of (1).
	
		\medskip
	\noindent
	\textbf{Proof of (3).}

	We study the behavior of $F'(s)=\zeta'(s)+\psi'(1-s)$ as $s\to 1^-$ and $s\to 0^+$.
		\medskip
	\noindent
	\textbf{Behavior as $s \to 1^-$.}
	First, recall the Laurent expansion of the Riemann zeta function at $s=1$,
	\[
	\zeta(s)=\frac{1}{s-1}
	+ \sum_{k=0}^{\infty} \frac{(-1)^k\,\gamma_k\,(s-1)^k}{k!},
	\]
	which implies
	\[
	\zeta'(s)
	=-\frac{1}{(s-1)^2}
	+ \sum_{k=1}^{\infty} \frac{(-1)^k\,\gamma_k\,(s-1)^{k-1}}{(k-1)!}.
	\]
	In particular, the finite part of $\zeta'(s)$ at $s=1$ equals $-\gamma_1$.
	
	Next, we use the classical power series expansion of the digamma function by
	\cite{gr},
		\begin{equation}\label{digammaderie}
	\psi(z+1)=\psi(z)+\frac{1}{z}
	= -\gamma_0 + \sum_{j=2}^{\infty} (-1)^j \zeta(j)\, z^{j-1},
	\qquad |z|<1.
	\end{equation}
	Differentiating termwise, we obtain
	\[
\psi'(z)
= \frac{1}{z^{2}}
+ \sum_{j=2}^{\infty} (-1)^j \zeta(j)\,
(j-1)\, z^{\,j-2}.
\]
	Replacing $z$ by $1-s$ yields
	\begin{equation*}
\psi'(1-s)
= \frac{1}{(1-s)^{2}}
+ \sum_{j=2}^{\infty} (-1)^{j} \zeta(j)\,
(j-1)\, (1-s)^{\,j-2}.
\end{equation*}
	
	Combining the above expansions, 
	\[
	\lim_{s\to 1^-} F'(s)
	=\lim_{s\to 1^-} (\zeta'(s)+\psi'(1-s))= -\gamma_1 + \zeta(2)
	= \frac{\pi^2}{6}-\gamma_1,
	\]
			\medskip
	\noindent
	\textbf{Behavior as $s \to 0^+$.}
It is well known that $\zeta'(0)=-\frac{1}{2}\log(2\pi)$. By taking $z=-s$ in formula (\ref{digammaderie}), we have
	$$	\psi(1-s)
	= -\gamma_0 - \sum_{j=2}^{\infty}\zeta(j)\, s^{j-1}, \qquad |s|<1.$$
	Differentiating termwise, we obtain
		$$	\frac{d}{ds}\psi(1-s)
	=  - \sum_{j=2}^{\infty}\zeta(j)(j-1)\, s^{j-2}, \qquad |s|<1.$$
	Hence $\lim\limits_{s\to 0^+}\frac{d}{ds}\psi(1-s)=-\zeta(2)$. All in all,
	$$\lim_{s\to 0^+}F'(s)=-\frac{1}{2}\log(2\pi)+\zeta(2)=\frac{\pi^2}{6}-\frac{1}{2}\log(2\pi),$$
	which completes the proof of (3).
	
	\medskip
\noindent
\textbf{Proof of (2).}
It follows immediately from (1) and (3).
\end{proof}
Now we give the proof of Corollary \ref{cor1}.
\begin{proof}[Proof of Corollary \ref{cor1}]
	Since
	$$\pi\cot(\pi s)=\psi(1-s)-\psi(s),$$
	the inequality (\ref{congformula}) is equivalent to
	$$s < \zeta(s)-\psi(1-s)< b'\,s+b.
	$$
	
			A direct computation can obtain	$F(0)=\zeta(0)-\psi(1)=-\frac{1}{2}+\gamma_0$.
Using formula (\ref{zetaL}) and (\ref{digammaL}), it is easy to obtain that  $\lim\limits_{s\to 1^-}F(s)=2\gamma_0$.
	Then by Theorem \ref{Fap} (1),
	$$\zeta(s)-\psi(1-s)=F(s)<F(0)+(F(1)-F(0))s=(\gamma_0-\frac12)+(\gamma_0+\frac12)s.$$
	This establishes the right side of the inequality.
	
	Let $G(s)=F(s)-s$.
	Since 
			\[
	\lim_{s\to 0^+} G'(s)=\frac{\pi^2}{6}-\frac{1}{2}\log(2\pi)-1<0,
	\qquad
	\lim_{s\to 1^-} G'(s)=\frac{\pi^2}{6}-\gamma_1-1>0.
	\]
$G(s)$ has a unique minimum $s_0$ in $(0,1)$ such that $G'(s_0)=0$.
Via Mathematica, $s_0\approx0.484993$ and $G(s_0)\approx0.00306469>0$.
Hence $G(s)>0$ for $s\in (0,1)$ and the left side of the inequality is established. Moreover, 
$$\lim_{s\to 1^-}F(s)=\lim_{s\to 1^-}(b's+b)=2\gamma_0.$$
So $b's+b$ is the best possible linear upper bound for $F(s)$ on $(0,1)$,
which completes the proof of Corollary \ref{cor1}.
	\end{proof}

	\section{Concluding Remarks}
	Hurwitz \cite{h2} introduced the zeta function
	$$\zeta(s,q)=\sum_{n=0}^{\infty}\frac{1}{(n+q)^s},$$
	which is now known as the Hurwitz zeta function. It is one generalization of the Riemann zeta function. Berndt gave the Laurent series of 
 the Hurwitz zeta function in \cite{b}, i.e.,
	$$
	\zeta(s,q)
	= \frac{1}{s-1}
	+ \sum_{k=0}^{\infty}\frac{(-1)^k\,\gamma_k(q)\,(s-1)^k}{k!},
	$$
	where $\gamma_k(q)$ denotes the $k$-th generalized Stieltjes constant.
	
	Although the Hurwitz zeta function shares many analytic properties with the Riemann zeta function, the presence of the additional parameter 
$q$ makes the derivation of explicit inequalities more delicate. It would therefore be natural to ask whether our results can be extended to Hurwitz zeta function $\zeta(s,q)$. It is an interesting question, and we encourage readers to pursue this question.

\subsubsection*{\noindent\textbf{Funding}} {\small The authors are supported by National Natural Science Foundation of China (Nos. 12231009).}
\subsubsection*{\noindent\textbf{Data Availability}} {\small No additional data are available.}

\subsection*{\normalfont\Large\bfseries Declarations}
\par
\subsubsection*{\noindent\textbf{\small Conflict of interest:}} {\small The authors have no relevant financial or non-financial interests to disclose.}

\end{document}